\newtheorem{lemma}{Lemma}
\newtheorem{theorem}{Theorem}
\newcommand{\R}{\mathbb{R}}
\DeclareMathOperator{\relint}{relint}
\newcommand{\Ep}{{\mathrm{E}}}
\renewcommand{\Pr}{{\mathrm{P}}}
\DeclareMathOperator{\dist}{dist}
\begin{document}
\title{Detailed proof of Nazarov's inequality}
\author[Chernozhukov]{Victor Chernozhukov}
\author[Chetverikov]{Denis Chetverikov}
\author[Kato]{Kengo Kato}

\address[V. Chernozhukov]{
Department of Economics and Operations Research Center, MIT, 50 Memorial Drive, Cambridge, MA 02142, USA.}
\email{vchern@mit.edu}

\address[D. Chetverikov]{
Department of Economics, UCLA, Bunche Hall, 8283, 315 Portola Plaza, Los Angeles, CA 90095, USA.}
\email{chetverikov@econ.ucla.edu}

\address[K. Kato]{
Graduate School of Economics, University of Tokyo, 7-3-1 Hongo Bunkyo-ku, Tokyo 113-0033, Japan.}
\email{kkato@e.u-tokyo.ac.jp}

\date{\today.}

\thanks{We would like to thank Cl\'{e}ment Cerovecki for a comment.}

\begin{abstract}
The purpose of this note is to provide a detailed proof of Nazarov's inequality stated in Lemma A.1 in \cite{ChChKa17}. 
\end{abstract}

\keywords{Nazarov's inequality}

\maketitle

The statement of Lemma A.1 in \cite{ChChKa17}, which we called Nazarov's inequality, is as follows (we shall use here the different notation and clarify the constant). Unless otherwise stated, we shall follow the notational convention used in \cite{ChChKa17}. 

\begin{theorem}[Nazarov's inequality \cite{Nazarov03}; Lemma A.1 in \cite{ChChKa17}]
\label{lem: AC}
Let $Y=(Y_1,\dots,Y_p)'$ be a centered Gaussian random vector in $\R^p$ such that $\Ep[Y_j^2]\geq \underline{\sigma}^{2}$ for all $j=1,\dots,p$ and some constant $\underline{\sigma}>0$. 
Then for every $y \in \R^{p}$ and $\delta > 0$,
\[
\Pr(Y \leq y+\delta)-\Pr(Y \leq y)\leq \frac{\delta}{\underline{\sigma}} (\sqrt{2\log p}+2).
\]
\end{theorem}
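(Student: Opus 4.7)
The plan is to reduce the orthant probability difference to an anti-concentration bound on the maximum excess $M := \max_{1\le j\le p}(Y_j - y_j)$ and then prove that bound via a smoothing argument and Gaussian integration by parts. Since $\{Y\le y+\delta\}\setminus\{Y\le y\}=\{M\in(0,\delta]\}$, the left-hand side of the claimed inequality equals $\Pr(M\in(0,\delta])\le\delta\cdot\sup_{t\in\R} f_{M}(t)$, where $f_M$ denotes the (bounded) density of $M$. Hence it suffices to prove the uniform-in-$y$ density bound $\sup_{t\in\R} f_{M}(t)\le\underline{\sigma}^{-1}(\sqrt{2\log p}+2)$. To handle the non-smoothness of the max, I replace $M$ by the log-sum-exp $F_\beta(z):=\beta^{-1}\log\sum_{j=1}^p e^{\beta z_j}$, which satisfies $\max_j z_j\le F_\beta(z)\le\max_j z_j+\beta^{-1}\log p$, and then send $\beta\to\infty$ at the end.

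The key feature of $F_\beta$ is that $\nabla F_\beta(z)=\pi(z)$ with $\pi_j(z)=e^{\beta z_j}/\sum_k e^{\beta z_k}$ forming a probability vector, i.e.\ $\sum_j\pi_j(z)=1$. Writing $W:=F_\beta(Y-y)$, the chain rule gives $\partial_{Y_j}\psi(W)=\psi'(W)\pi_j(Y-y)$ for any smooth $\psi$, so summing over $j$ yields
\[
\Ep[\psi'(W)]=\sum_j \Ep[\partial_{Y_j}\psi(W)].
\]
Gaussian integration by parts (Stein's identity) rewrites each right-hand term as $\Ep[\psi(W)\cdot g_j(Y)]$ for an explicit $g_j$ built from $\Sigma^{-1}$. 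Taking $\psi$ to be a smooth approximation of $\mathbf{1}\{W>t\}$ converts the left side into the density $f_W(t)$, and controlling the resulting Gaussian expectation via the variance lower bound $\sigma_j^2\ge\underline{\sigma}^2$ together with the classical Gaussian maximal inequality $\Ep[\max_j Y_j/\sigma_j]\le\sqrt{2\log p}$ produces the desired density bound, which then passes to $f_M$ as $\beta\to\infty$.

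The main obstacle is obtaining the sharp constant. A direct application of Gaussian integration by parts with uniform weights introduces the spurious quantity $\|\Sigma^{-1}\mathbf{1}\|$ in place of $1/\underline{\sigma}$. The delicate step is to weight the Stein identity by a problem-adapted vector and to exploit the probability-vector structure of $\pi(Y-y)$, so that the right-hand expectation collapses to a Gaussian maximum of standardized coordinates $Y_j/\sigma_j$ weighted by $1/\underline{\sigma}$; the classical maximal inequality then supplies the leading $\sqrt{2\log p}$, and a standard sub-Gaussian tail integral contributes the additive $2$.
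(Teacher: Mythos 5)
Your reduction is sound up to a point: the equality $\{Y\le y+\delta\}\setminus\{Y\le y\}=\{0<M\le\delta\}$ with $M=\max_j(Y_j-y_j)$ is correct, $M$ does have a bounded density, and the target density bound $\sup_t f_M(t)\le\underline{\sigma}^{-1}(\sqrt{2\log p}+2)$ is exactly what needs to be shown. The log-sum-exp smoothing and the identity $\sum_j\pi_j=1$ are also the right tools for a Stein-type argument. However, the proof stops precisely where the real difficulty begins. You correctly observe that the naive application of Gaussian integration by parts gives
\[
\Ep[\psi'(W)]=\sum_{j}\Ep\bigl[(\Sigma^{-1}Y)_j\,\psi(W)\bigr]=\Ep\bigl[\mathbf{1}'\Sigma^{-1}Y\cdot\psi(W)\bigr],
\]
whose size is governed by $\sqrt{\mathbf{1}'\Sigma^{-1}\mathbf{1}}$; already for $\Sigma=I_p$ this is $\sqrt{p}$, catastrophically worse than $\sqrt{2\log p}$. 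The final paragraph then asserts that a ``problem-adapted'' weighting of Stein's identity, combined with the probability-vector structure of $\pi$, ``collapses'' this to a Gaussian maximum of standardized coordinates. No such weighting is exhibited, and it is not clear one exists: any fixed weight vector $w$ with $w'\pi$ bounded below forces $w_j\gtrsim 1$ for all $j$, reproducing the same $\mathbf{1}'\Sigma^{-1}\mathbf{1}$ pathology; letting the weight depend on which coordinate attains the maximum amounts to conditioning on the argmax, which breaks the Gaussian integration-by-parts formula you are relying on; and since off-diagonal entries $\Sigma_{jk}$ can be negative, the quantity $\sum_k\Sigma_{jk}\pi_k$ has no uniform lower bound either. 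In short, the single step that is supposed to produce $1/\underline{\sigma}$ and $\sqrt{2\log p}$ rather than $\|\Sigma^{-1}\mathbf{1}\|$ and $\log p$ is exactly the step that is left as a black box.

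The paper's argument is of a genuinely different character and sidesteps all of this. After reducing to $X\sim N(0,I_p)$ and half-spaces $\{a_j'x\le b_j\}$ with $\|a_j\|=1$, it works directly with the Gaussian surface area of the convex polyhedron $K$: it shows the limit $\lim_{\delta\downarrow 0}\gamma_p(K(\delta)\setminus K)/\delta$ equals the sum of $\int_F\varphi_p\,d\sigma$ over facets $F$, and bounds each facet integral by $(\dist(0,F)+1)\gamma_p(N_F)$ via a one-dimensional estimate on $\int_0^\infty e^{-tr-t^2/2}\,dt$ (Nazarov's lemma). Splitting facets according to whether $\dist(0,F)$ exceeds $\sqrt{2\log p}$ then gives the sharp constant, using only that $K$ has at most $p$ facets and that the outer normal regions $N_F$ are disjoint. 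This geometric decomposition is what produces $\sqrt{2\log p}$ rather than $\log p$; it does not seem recoverable from a Stein/smoothing argument of the type you sketch. If you want to salvage your route, you would need to actually produce the weighting and the resulting bound, and I would expect at best a $\log p$ rate, not the $\sqrt{\log p}$ rate claimed.
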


This theorem can be alternatively stated as follows. 

\begin{theorem}
\label{lem: AC2}
Let $X \sim N(0,I_{p})$, and let $a_{1},\dots,a_{p} \in \mathbb{S}^{p-1} =\{ x \in \R^{p} : \| x \| = 1 \}$ and $b_{1},\dots,b_{p} \in \R$ be given. 
Then for every $\delta > 0$,
\[
\Pr \left (a_{j}'X \le b_{j} + \delta, \ 1 \le \forall j \le p \right) - \Pr \left (a_{j}'X \le b_{j}, \ 1 \le \forall j \le p \right) \le \delta (\sqrt{2\log p}+2).
\]
\end{theorem}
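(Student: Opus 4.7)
The plan is to reduce the inequality to a pointwise bound on the density of $M := \max_{1\le j \le p}(a_j'X - b_j)$. Setting $\xi_j := a_j'X - b_j \sim N(-b_j,1)$, the map $x \mapsto \max_j(a_j'x-b_j)$ is $1$-Lipschitz and a.s.\ differentiable with $|\nabla M| \equiv 1$ (because $\|a_j\|=1$), so the law of $M$ admits a continuous density $g$ on $\R$. Hence the LHS of Theorem~\ref{lem: AC2} equals $\Pr(0 < M \le \delta) = \int_0^\delta g(t)\,dt$, and since replacing $b_j$ by $b_j - t$ shifts $M$ by $+t$, the problem reduces to the pointwise estimate
\[
g(0) \le \sqrt{2\log p} + 2
\]
uniformly over unit vectors $(a_j)$ and constants $(b_j) \in \R^p$.

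By the co-area formula (equivalently, computing the Gaussian surface measure of $\partial\{M\le 0\}$),
\[
g(0) = \sum_{j=1}^p \phi(b_j)\,\Pr\bigl(a_k'X \le b_k,\ \forall k\ne j \,\bigm|\, a_j'X = b_j\bigr),
\]
where $\phi$ is the standard normal density. I would split the sum by $|b_j|$: for indices with $|b_j|>\sqrt{2\log p}$ one has $\phi(b_j) \le (p\sqrt{2\pi})^{-1}$, so these contribute at most $1/\sqrt{2\pi}$ in aggregate. For the remaining indices I would combine the Mills-type estimate $\phi(u) \le (|u|+2)\Phi(u)$ with the joint Gaussian structure of the conditional distribution to squeeze out the $\sqrt{2\log p}$ factor.

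The last step is the main obstacle. In the independent prototype $a_j = e_j$ with equal shifts $b_j = b$ one has $g(0) = p\phi(b)\Phi(b)^{p-1}$, whose maximum over $b$ is controlled by a one-variable calculus optimization that yields the bound with leading constant $1/e$. In the general correlated case the conditional probability appearing in the co-area formula is a $(p{-}1)$-dimensional Gaussian orthant probability on the hyperplane $\{a_j'x=b_j\}$ that no longer factors; handling it uniformly requires a more delicate argument, e.g.\ a Gaussian comparison reducing the orthant probability to the independent case, or a Gaussian integration-by-parts applied to $\mathbf{1}\{M\le 0\}$ that uses $\nabla M = a_J$ (with $J = \arg\max_j \xi_j$) to re-express $g(0)$ in terms of $\Ep[\langle X, a_J\rangle\,\mathbf{1}\{M\le 0\}]$, which can then be controlled by concentration of the $1$-Lipschitz quantity $\langle X, a_J\rangle$.
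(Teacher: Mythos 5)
Your opening moves match the paper's: the reduction to a bound on the density of $M=\max_j(a_j'X-b_j)$ at $0$, the co-area identity expressing $g(0)$ as a sum of Gaussian surface integrals over the facets of $K=\{x: a_j'x\le b_j,\ \forall j\}$, and the treatment of the far indices ($|b_j|>\sqrt{2\log p}$: at most $p$ facets, each contributing at most $(p\sqrt{2\pi})^{-1}$, hence at most $1$ in total) are all exactly what the paper does. The genuine gap is precisely the step you yourself flag as ``the main obstacle'': you give no proof of a bound for the facets with $|b_j|\le\sqrt{2\log p}$, and none of the routes you sketch is carried out or obviously workable. A Slepian/comparison argument controls orthant probabilities, not the conditional surface densities in your co-area formula; and combining $\phi(u)\le(|u|+2)\Phi(u)$ with the conditional orthant probabilities produces up to $p$ terms of the form $(|b_j|+2)\Pr(a_k'X\le b_k\ \forall k)$-type quantities, which do not sum to anything of order $\sqrt{\log p}$ in general. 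The one-variable optimization in your independent prototype does not extend to the correlated case, which is the whole difficulty.

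The missing idea (Nazarov's) is to compare each facet's surface integral not with quantities inside $K$ but with the Gaussian mass of the exterior normal region $N_F=\{x\in\R^p\setminus K: P_Kx\in\relint(F)\}$, where $P_K$ is the metric projection onto $K$. Writing $v$ for the outward unit normal of the facet $F\subset\{x:a_j'x=b_j\}$ and integrating the Gaussian density along the ray $x+tv$, $t>0$, the one-dimensional Mills-ratio bound $\int_0^\infty e^{-t|x'v|-t^2/2}\,dt\ge(|x'v|+1)^{-1}$ yields $\int_F\varphi_p(x)\,d\sigma(x)\le(|b_j|+1)\,\gamma_p(N_F)$, with $\gamma_p=N(0,I_p)$ the standard Gaussian measure. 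The regions $N_F$ for distinct facets are pairwise disjoint, so $\sum_F\gamma_p(N_F)\le 1$, and hence the near facets contribute at most $\sqrt{2\log p}+1$ in total; adding your far-facet bound gives $\sqrt{2\log p}+2$. This is essentially your Mills-type inequality, but applied along the outward normal so as to produce disjoint exterior regions whose total mass is at most one; that disjointness is what replaces the uniform control of correlated orthant probabilities that your plan leaves open, and without it the argument does not close.
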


Theorem \ref{lem: AC} follows directly from Theorem \ref{lem: AC2}. Indeed, let $\Sigma = \Ep[YY']$, and observe that $Y \stackrel{d}{=} \Sigma^{1/2}X$. Let $\Sigma^{1/2} = (\sigma_{1},\dots,\sigma_{p})'$, and observe that 
\[
\Pr(Y \leq y+\delta)= \Pr \left \{ (\sigma_{j}/\| \sigma_{j} \|)' X \le y/\| \sigma_{j} \| + \delta/\| \sigma_{j} \|, \ 1 \le \forall j \le p \right ).
\]
Since $\| \sigma_{j} \|^{2} =\Ep[Y_{j}^{2}] \ge \underline{\sigma}^{2}$ for all $j$, the conclusion of Theorem \ref{lem: AC} follows from Theorem \ref{lem: AC2}. 

In what follows, we shall prove Theorem \ref{lem: AC2}.  For the notational convenience, let $K(t)=\{ x \in \R^{p} : a_{j}'x \le b_{j} + t, \ 1 \le \forall j \le p \}$ for $t \in \R$ with $K = K(0)$; we have to show that
\[
\Pr \left\{ X \in K(\delta) \setminus  K \right \} \le \delta (\sqrt{2\log p}+2)
\]
for every $\delta > 0$. 
To this end, define the function
\[
G (t) = \Pr \{ X \in K(t) \} =  \Pr \left \{ \max_{1 \le j \le p} (a_{j}'X -b_{j}) \le t \right \}, \ t \in \R. 
\]
Since $a_{j}'X-b_{j}, j=1,\dots,p$ are non-degenerate Gaussian, for any set $B \subset  \R$ with Lebesgue measure zero, 
\[
\Pr \left \{ \max_{1 \le j \le p} (a_{j}'X -b_{j})  \in B \right \} \le \sum_{j=1}^{p} \Pr \{  (a_{j}'X -b_{j})  \in B  \} = 0,
\]
so that $G$ is an absolutely continuous distribution function and hence is almost everywhere differentiable with 
\[
G(\delta) - G(0) = \int_{0}^{\delta} G'(t) dt = \int_{0}^{\delta} G_{+}'(t) dt,
\]
where $G_{+}'$ denotes the right derivative of $G$. We will show that $G$ is everywhere right differentiable with $G_{+}'(t) \le \sqrt{2\log p}+2$
for every $t \in \R$, which leads to the conclusion of Theorem \ref{lem: AC2}.

We begin with noting that by replacing $b_{j}$ with $b_{j}+t$, it suffices to show that $G$ is right differentiable at $t=0$ with $G_{+}'(0) \le \sqrt{2\log p}+2$. 
Therefore, the proof of Theorem \ref{lem: AC2} reduces to the following lemma.

\begin{lemma}
\label{lem: Klivans}
Work with the notation as stated above. Then  
\[
\lim_{\delta \downarrow 0} \frac{\Pr\{ X \in K(\delta) \setminus K\}}{\delta} \le \sqrt{2\log p}+2.
\]
\end{lemma}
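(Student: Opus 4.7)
My plan is to decompose the shell $K(\delta) \setminus K$ according to which facet of $K$ is activated, pass to the limit $\delta \downarrow 0$ to reduce the problem to bounding the Gaussian surface measure of $\partial K$, and then bound that quantity by $\sqrt{2\log p} + 2$.

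First I would observe that any $x \in K(\delta) \setminus K$ violates at least one original constraint while satisfying all the relaxed ones, so by the union bound
\[
\Pr\{X \in K(\delta) \setminus K\} \;\le\; \sum_{j=1}^{p} \Pr\bigl\{b_j < a_j'X \le b_j + \delta,\ a_k'X \le b_k + \delta \text{ for all } k \neq j\bigr\}.
\]
Retaining the ``other constraints'' inside the probability is essential; dropping them yields only $\sum_j \phi(b_j)$, which may be as large as order $p$. To pass to the limit, disintegrate each summand by conditioning on $a_j'X = u$: writing $X = u a_j + W_j$ with $W_j \sim N(0, I_p - a_j a_j')$ independent of $a_j'X$, the $j$-th summand becomes $\int_{b_j}^{b_j+\delta} \phi(u)\, Q_j(u,\delta)\, du$, where $Q_j(u,\delta) := \Pr(a_k'X \le b_k + \delta,\ k \neq j \mid a_j'X = u)$ is jointly continuous in~$(u,\delta)$. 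Dividing by~$\delta$ and letting $\delta \downarrow 0$ then produces
\[
\limsup_{\delta \downarrow 0} \frac{\Pr\{X \in K(\delta) \setminus K\}}{\delta} \;\le\; \sum_{j=1}^{p} \phi(b_j)\, P_j, \qquad P_j := \Pr(a_k'X \le b_k,\ k \neq j \mid a_j'X = b_j),
\]
which is the density of $M(X) := \max_{j}(a_j'X - b_j)$ at~$0$.

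The main obstacle is then to show $S := \sum_j \phi(b_j) P_j \le \sqrt{2\log p} + 2$. A convenient device is to split at the threshold $\tau := \sqrt{2\log p}$: indices with $b_j \ge \tau$ contribute at most $p\phi(\tau) = 1/\sqrt{2\pi}$, which is absorbed into the additive constant~$2$. For indices with $b_j < \tau$, the factor~$P_j$ must be exploited; a natural route is a Gaussian integration-by-parts identity on~$K$ with a vector field agreeing with the outward normal on each facet, which rewrites~$S$ as an expectation involving the subgradient of~$M$ and the Gaussian vector~$X$. The standard maximal inequality $\Ep[\max_{j} a_j'X] \le \sqrt{2\log p}$ would then furnish the dominant $\sqrt{2\log p}$ term, while Gaussian concentration of the $1$-Lipschitz function~$M$ around its mean controls the remaining constant. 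This last step is the heart of Nazarov's original argument and is where the detailed work lies.
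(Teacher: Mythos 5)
Your reduction is sound and is in fact the same object the paper arrives at, just reached by conditioning rather than by a geometric decomposition: with $F_j := \{x : a_j'x = b_j,\ a_k'x \le b_k\ \forall k\neq j\}$ the facet in the $j$-th hyperplane, one has $\phi(b_j)P_j = \int_{F_j}\varphi_p(x)\,d\sigma(x)$, so your $S = \sum_j \phi(b_j)P_j$ is exactly the paper's $\sum_{F}\int_F\varphi_p\,d\sigma$. (Two small remarks: the paper shows the limit actually exists by decomposing $K(\delta)\setminus K$ over faces and showing faces of dimension $\le p-2$ contribute $O(\delta^2)$, whereas your union-bound/limsup argument only gives an upper bound on the $\limsup$; and you should split on $|b_j|$ rather than $b_j$, since $\phi(b_j)$ is small for $b_j \le -\tau$ too and the inequality you need in the second bucket requires $|b_j|\le\tau$, not $b_j\le\tau$.)

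The genuine gap is in the second bucket. You correctly identify that $P_j$ must be ``exploited,'' but the mechanism you gesture at — Gaussian integration by parts on $K$, then the maximal inequality $\Ep[\max_j a_j'X]\le\sqrt{2\log p}$ and concentration of the Lipschitz function $M$ — is not developed, and I do not see how it closes to the stated constant. Gaussian IBP applied to $1_K$ yields the \emph{vector} identity $\int_{\partial K}\nu(x)\varphi_p(x)\,d\sigma(x) = -\Ep[X\,1_K(X)]$, which does not control the scalar total mass $\sum_F\int_F\varphi_p\,d\sigma$ because the outward normals point in different directions; and the maximal inequality bounds $\Ep[M]$, not the density of $M$ at $0$. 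What actually closes the argument in the paper is a pointwise lower bound on the Gaussian mass of the outer normal region $N_{F_j} = \{x+tv : x\in\relint(F_j),\ t>0\}$: integrating $\varphi_p$ along the normal ray and using the Mills-ratio estimate $\int_r^\infty e^{-t^2/2}\,dt \ge e^{-r^2/2}/(r+1)$ gives
\[
\gamma_p(N_{F_j}) \;\ge\; \frac{1}{|b_j|+1}\int_{F_j}\varphi_p(x)\,d\sigma(x) \;=\; \frac{\phi(b_j)P_j}{|b_j|+1}.
\]
Since the $N_{F_j}$ are pairwise disjoint, $\sum_j\gamma_p(N_{F_j})\le 1$, and so $\sum_{|b_j|\le\tau}\phi(b_j)P_j \le (\tau+1)\sum_j\gamma_p(N_{F_j}) \le \sqrt{2\log p}+1$. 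This is the single key inequality (Lemma~\ref{lem: Nazarov} in the paper) that your outline is missing; without it, or some equally concrete substitute, the proof does not go through.
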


Note that the existence of the limit on the left hand side is a part of the lemma. 

\begin{proof}[Proof of Lemma \ref{lem: Klivans}]
Let $\gamma_{p} = N(0,I_{p})$ and denote by $\varphi_{p}(x)$ the density of $\gamma_{p}$: 
\[
\varphi_{p}(x)=(2\pi)^{-p/2} e^{-\| x \|^{2}/2}, \ x \in \R^{p}.
\]
The set $K= \bigcap_{j=1}^{p} \{ x : a_{j}'x \le b_{j} \}$ is a convex polyhedron.
For $x \in \R^{p}$, let $P_{K}x$ denote the projection of $x$ to $K$, i.e., $\| x-P_{K}x \| = \min_{y \in K} \| x-y \|$. 
For a face $F$ of $K$, define 
\[
N_{F} = \{ x \in \R^{p} \setminus K : P_{K} x \in \relint (F) \} \quad \text{and} \quad N_{F}(\delta) = N_{F} \cap K(\delta),
\]
where $\relint (F)$ denotes the relative interior of $F$ (by ``face'', we mean a proper face).
Then $K(\delta) \setminus K = \bigcup_{F: \ \text{face of} \ K} N_{F}(\delta)$. Let $\dist (x,F) = \inf \{ \| x-y \| : y \in F \}$ denote the Euclidean distance between $x$ and $F$. Now, for any face $F$ of $K$ with dimension at most $p-2$, 
since $N_{F}(\delta) \subset F^{C\delta}= \{ x \in \R^{p} : \dist (x,F) \le C\delta \}$ for some sufficiently large constant $C$ (that may depend on $p$), we have $\gamma_{p}(N_{F}(\delta)) = O(\delta^{2}) = o(\delta)$ as $\delta \downarrow 0$. 
Hence, 
\[
\gamma_{p} (K(\delta) \setminus K ) = \gamma_{p} \left ( \bigcup_{F: \ \text{facet of} \ K} N_{F}(\delta) \right ) + o(\delta)
\]
as $\delta \downarrow 0$.
In addition, for two distinct facets $F_{1}$ and $F_{2}$ of $K$, the sets $N_{F_{1}}(\delta)$ and $N_{F_{2}}(\delta)$ are disjoint, and so the problem reduces to proving that
\[
\sum_{F: \ \text{facet of} \ K} \lim_{\delta \downarrow 0} \frac{\gamma_{p} (N_{F}(\delta))}{\delta} \le \sqrt{2\log p} + 2.
\]
Recall that a facet of $K$ is a face of $K$ with dimension $p-1$. We shall prove the following lemma.

\begin{lemma}
\label{lem: Nazarov}
For any facet $F$ of $K$, 
\begin{equation}
\label{eq: Nazarov}
\lim_{\delta \downarrow 0} \frac{\gamma_{p}(N_{F}(\delta))}{\delta} = \int_{F} \varphi_{p}(x) d \sigma (x) \le (\dist (0,F) + 1) \gamma_{p}(N_{F}),
\end{equation}
where $d \sigma(x)$ denotes the standard surface measure on $F$.
\end{lemma}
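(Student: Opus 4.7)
The plan is to exploit the fact that $F$ is a facet: fixing any index $j$ with $F \subset \{x : a_j' x = b_j\}$, the vector $a_j$ is the outward unit normal to $K$ along $F$. This gives a ``normal slab'' parametrization that reduces everything to a one-dimensional Gaussian integral along the $a_j$ direction. First I would verify
\[
N_F = \{y + s a_j : y \in \relint(F),\ s > 0\}, \quad N_F(\delta) = \{y + s a_j : y \in \relint(F),\ 0 < s \le \delta\}.
\]
The key identity is $\|y + s a_j - z\|^2 = s^2 + 2s(b_j - a_j'z) + \|y - z\|^2 \ge s^2 + \|y - z\|^2$ for any $z \in K$ (using $a_j'z \le b_j = a_j'y$), which identifies $y$ as the unique projection of $y + s a_j$ onto $K$; the cutoff $s \le \delta$ comes from $a_j'(y + s a_j) = b_j + s$, and for $k \ne j$ the slack constraint $a_k'(y + s a_j) \le b_k + \delta$ is automatic because $a_k'y \le b_k$ and $|a_k'a_j| \le 1$.

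Next, since $a_j$ is orthogonal to the affine hull of $F$, Lebesgue measure factorizes as $dx = d\sigma(y)\, ds$, and $\|y + s a_j\|^2 = \|y\|^2 + 2 s b_j + s^2$ (from $a_j'y = b_j$ on $F$ and $\|a_j\| = 1$) gives $\varphi_p(y + s a_j) = \varphi_p(y)\, e^{-s b_j - s^2/2}$. Writing $I(b) := \int_0^\infty e^{-s b - s^2/2}\, ds$ and integrating in $s$,
\[
\gamma_p(N_F(\delta)) = \Big(\int_F \varphi_p\, d\sigma\Big) \int_0^\delta e^{-s b_j - s^2/2}\, ds, \quad \gamma_p(N_F) = \Big(\int_F \varphi_p\, d\sigma\Big) I(b_j).
\]
Dividing the first by $\delta$ and letting $\delta \downarrow 0$ delivers the equality via the fundamental theorem of calculus. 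The inequality then reduces to the scalar claim $(\dist(0,F)+1)\, I(b_j) \ge 1$; since the hyperplane carrying $F$ lies at distance $|b_j|$ from the origin, $\dist(0,F) \ge |b_j|$, so it suffices to show $(|b| + 1)\, I(b) \ge 1$ for every $b \in \R$.

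For $b \le 0$ this is easy: the identity $I'(b) = b I(b) - 1$ (from $\int_0^\infty (b+s) e^{-sb - s^2/2}\, ds = 1$) combined with the crude bound $I(b) < 1/b$ for $b > 0$ (since $e^{-s^2/2} < 1$) gives $I' < 0$ on all of $\R$, hence $I(b) \ge I(0) = \sqrt{\pi/2} > 1$ for $b \le 0$. For $b \ge 0$, I would analyze $\phi(b) := (b+1)\, I(b) - 1$: direct computation gives $\phi(0) = \sqrt{\pi/2} - 1 > 0$, and the Mills expansion $I(b) = 1/b - 1/b^3 + O(1/b^5)$ yields $\phi(b) = 1/b + O(1/b^2) > 0$ at infinity, while at any hypothetical zero $b_0 > 0$ the relation $I(b_0) = 1/(b_0+1)$ forces $\phi'(b_0) = I(b_0)(b_0^2 + b_0 + 1) - (b_0+1) = -b_0/(b_0 + 1) < 0$. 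Thus $\phi$ can only cross zero downward, and positivity at both endpoints rules out any zero.

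The main obstacle is the scalar inequality $(b+1)\, I(b) \ge 1$ for $b \ge 0$, essentially the classical Mills-ratio lower bound; once the normal-slab parametrization and this Mills-type bound are in hand, everything else is bookkeeping.
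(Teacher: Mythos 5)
Your proof is correct and follows essentially the same strategy as the paper's: parametrize $N_F$ in normal-slab coordinates via the outward unit normal $a_j$ (the paper's $v$), factor $\varphi_p$ along the normal direction, and reduce to a one-dimensional Mills-ratio bound combined with $\dist(0,F) \ge |b_j| = |x'v|$. The only real divergence is in how you prove the scalar inequality: the paper shows $\psi(r) := \int_r^\infty e^{-t^2/2}\,dt - e^{-r^2/2}/(r+1)$ is nonincreasing on $[0,\infty)$ with $\psi(\infty)=0$, which is a bit more direct than your zero-crossing analysis of $\phi(b)=(b+1)I(b)-1$, but both arguments are valid.
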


\begin{proof}[Proof of Lemma \ref{lem: Nazarov}]
The proof is essentially due to \cite{Nazarov03}, p.171-172.
Let $v$ be the outward unit normal vector to $\partial K$ at $F$. Then $N_{F} = \{ x+tv : x \in \relint(F), t > 0 \}$ and $N_{F} (\delta) = \{ x+tv: x \in \relint(F), 0 < t \le \delta \}$. 
Pick any  $\overline{x} \in F$. 
Then the set $[ F-\overline{x} ] = \{  x-\overline{x} : x \in F \}$ is contained in a linear subspace of $\R^{p}$ with dimension $p-1$.  
Let $\{ q_{1},\dots,q_{p-1} \}$ be an orthonormal basis of the linear subspace.
Then every $x \in F$ can be parameterized as 
\[
x=x (z_{1},\dots,z_{p-1})= \overline{x} + z_{1} q_{1} + \cdots + z_{p-1} q_{p-1}. 
\]
Hence every $y \in N_{F}$ can be parameterized as
\begin{align*}
y = y(z_{1},\dots,z_{p-1},t) = x(z_{1},\dots,z_{p-1})+tv = \overline{x} + z_{1} q_{1} + \cdots + z_{p-1} q_{p-1} + t v.
\end{align*}
Think of $(z_{1},\dots,z_{p-1},t)$ as the coordinates on $N_{F}$. Since the Jacobian matrix with respect to the change of variables $(z_{1},\dots,z_{p-1},t)' \to y$ is $[q_{1},\dots,q_{p-1},v]$ and the matrix $[q_{1},\dots,q_{p-1},v]$ is orthogonal, we have 
\begin{equation}
\label{eq: gaussian_volume}
\begin{split}
\gamma_{p}(N_{F}) &= \int_{y(z_{1},\dots,z_{p-1},t)  \in N_{F}} \varphi_{p} (y(z_{1},\dots,z_{p-1},t) )dz_{1}\cdots dz_{p-1} dt \\
&=\int_{x(z_{1},\dots,z_{p-1}) \in F} \left ( \int_{0}^{\infty} \varphi_{p} (y(z_{1},\dots,z_{p-1},t) ) dt \right ) dz_{1} \cdots dz_{p-1}.
\end{split}
\end{equation}
Similarly, we have
\[
\gamma_{p}(N_{F}(\delta)) = \int_{0}^{\delta} \left ( \int_{x(z_{1},\dots,z_{p-1}) \in F} \varphi_{p} (y(z_{1},\dots,z_{p-1},t) ) dz_{1} \cdots dz_{p-1} \right ) dt,
\]
and hence
\[
\lim_{\delta \downarrow 0} \frac{\gamma_{p}(N_{F}(\delta))}{\delta} = \int_{x(z_{1},\dots,z_{p-1}) \in F} \varphi_{p} (x(z_{1},\dots,z_{p-1}) ) dz_{1} \cdots dz_{p-1} = \int_{F} \varphi_{p}(x) d\sigma(x).
\]
So, we have proved the first equation in (\ref{eq: Nazarov}). 

To prove the second inequality in (\ref{eq: Nazarov}),  observe that for any $x \in F$, 
\begin{equation}
\label{eq: intermediate_inequality}
 \int_{0}^{\infty} \varphi_{p} (x + tv) dt \ge \varphi_{p} (x) \int_{0}^{\infty} e^{-t |x'v|} e^{-t^{2}/2} dt \ge \frac{\varphi_{p}(x)}{|x'v| + 1}.
 \end{equation}
To verify the last inequality, let $r =|x'v|$ and observe that $\int_{0}^{\infty} e^{-tr - t^{2}/2} dt = e^{r^{2}/2} \int_{r}^{\infty} e^{-t^{2}/2} dt$, and so it suffices to show that $\psi (r) := \int_{r}^{\infty} e^{-t^{2}/2} dt - e^{-r^{2}/2}/(r+1) \ge 0$ for all $r \ge 0$. But this follows from the observation that $\psi'(r) = -re^{-r^{2}/2}/(r+1)^{2} \le 0$ for all $r \ge 0$ and $\lim_{r \to \infty} \psi (r) = 0$. Combining (\ref{eq: gaussian_volume}) and (\ref{eq: intermediate_inequality}), we have 
\begin{align*}
\gamma_{p}(N_{F}) &\geq \int_{x(z_{1},\dots,z_{p-1}) \in F} \left ( \frac{1}{|x(z_{1},\dots,z_{p-1})'v| + 1} \right ) \varphi_{p}(x(z_{1},\dots,z_{p-1}) ) dz_{1}\cdots dz_{p-1} \\
 &= \int_{F} \left ( \frac{1}{|x'v|+ 1} \right ) \varphi_{p}(x) d \sigma (x). 
\end{align*}
The value of $x'v$ is invariant for any $x$ in the hyperplane containing $F$, and so choosing $x = \dist (0,F) v$ or $x=-\dist (0,F) v$, we have $|x'v| = \dist (0,F)$. Therefore, we conclude that 
\[
\gamma_{p}(N_{F}) \ge \frac{1}{\dist (0,F) + 1} \int_{F} \varphi_{p}(x) d\sigma(x).
\]
This completes the proof of Lemma \ref{lem: Nazarov}. 
\end{proof}
Return to the proof of  Lemma \ref{lem: Klivans}. The rest of the proof is essentially due to \cite{Klivans08}. So far, we have shown that 
\[
\lim_{\delta \downarrow  0} \frac{\gamma_{p}(K(\delta) \setminus K)}{\delta} = \sum_{F: \ \text{facet of} \ K} \int_{F} \varphi_{p}(x) d\sigma(x),
\]
and for each facet $F$ of $K$, $\int_{F} \varphi_{p}(x) d\sigma(x) \le (\dist (0,F) + 1) \gamma_{p}(N_{F})$. 
Decompose the sum $\sum_{F: \ \text{facet of} \ K}$ into two parts:
\[
\sum_{F: \ \text{facet of} \ K} \int_{F} \varphi_{p}(x) d\sigma(x) =\sum_{\dist (0,F) > \sqrt{2 \log p}} + \sum_{\dist (0,F) \leq \sqrt{2 \log p}}.
\]
First, consider the case where $\dist (0,F) > \sqrt{2\log p}$, and suppose that $\overline{x} = \dist(0,F) v$ is in the hyperplane containing $F$ (otherwise take $\overline{x} =- \dist (0,F)v$), where $v$ denotes the outward unit normal vector to $\partial K$ at $F$. Using the  coordinates appearing in the proof of Lemma \ref{lem: Nazarov}, we can parameterize $x \in F$  as 
\[
x = x(z_{1},\dots,z_{p-1}) = \overline{x} + z_{1}q_{1}+\cdots+z_{p-1}q_{p-1}.
\]
Letting $z= (z_{1},\dots,z_{p-1})'$, we have 
\[
\varphi_{p}(x) = (2\pi)^{-p/2} e^{-\| \overline{x} \|^{2}/2} e^{-\| x-\bar{x} \|^{2}/2} = \varphi_{1}(\| \overline{x} \|) \varphi_{p-1}(z) \leq (2\pi)^{-1/2} p^{-1} \varphi_{p-1}(z),
\]
where we have used that $\| \overline{x} \| = \dist (0,F) > \sqrt{2\log p}$. Hence, using the fact that the number of facets of $K$ is at most $p$, we have 
\[
\begin{split}
&\sum_{\dist (0,F) > \sqrt{2\log p}} \int_{F} \varphi_{p}(x) d\sigma(x) \le (2\pi)^{-1/2} p^{-1} \sum_{F: \ \text{facet of} \ K} \underbrace{\int_{x(z_{1},\dots,z_{p-1}) \in F} \varphi_{p-1}(z) dz}_{\le 1} \\
&\qquad \le (2\pi)^{-1/2} p^{-1} \times p = (2\pi)^{-1/2} \le 1.
\end{split}
\]
Next, if $\dist (0,F) \le \sqrt{2\log p}$, then 
\[
\int_{F} \varphi_{p}(x) d\sigma(x) \le (\sqrt{2\log p}+1) \gamma_{p}(N_{F}).
\]
Hence, 
\[
\sum_{\dist (0,F) \le \sqrt{2\log p}} \int_{F} \varphi_{p}(x) d\sigma(x) \le (\sqrt{2\log p}+1) \underbrace{\sum_{\dist (0,F) \le \sqrt{2\log p}}  \gamma_{p}(N_{F})}_{\le \gamma_{p}(\R^{p})=1} \le \sqrt{2\log p}+1.
\]
This completes the proof. 
\end{proof}

\end{document}